\numberwithin{equation}{section}
\numberwithin{figure}{section}
\theoremstyle{plain}
\newtheorem{thm}{\protect\theoremname}
\theoremstyle{remark}
\newtheorem{rem}[thm]{\protect\remarkname}
\theoremstyle{definition}
\newtheorem{defn}[thm]{\protect\definitionname}
\theoremstyle{definition}
\newtheorem{example}[thm]{\protect\examplename}
\theoremstyle{plain}
\newtheorem{cor}[thm]{\protect\corollaryname}
\renewcommand*{\epsilon}{\varepsilon}
\def\matrixobject@{%
  \edef \next@{={\DirectionfromtheDirection@ }}%
  \expandafter \toks@ \next@ \plainxy@
  \let\xy@@ix@=\xyq@@toksix@
  \xyFN@ \OBJECT@}
\let\xy@entry@@norm=\entry@@norm
\def\entry@@norm@patched{%
  \let\object@=\matrixobject@
  \xy@entry@@norm }
\newcommand{\twocong}[2][0.5]{\ar@{}[#2] \save ?(#1)*{\cong}\restore}
\newcommand{\twoeq}[2][0.5]{\ar@{}[#2] \save ?(#1)*{=}\restore}
\newcommand{\ltwocell}[3][0.5]{\ar@{}[#2] \ar@{=>}?(#1)+/r 0.15cm/;?(#1)+/l 0.15cm/^{#3}}
\newcommand{\rtwocell}[3][0.5]{\ar@{}[#2] \ar@{=>}?(#1)+/l 0.15cm/;?(#1)+/r 0.15cm/^{#3}}
\newcommand{\utwocell}[3][0.5]{\ar@{}[#2] \ar@{=>}?(#1)+/d  0.15cm/;?(#1)+/u 0.15cm/_{#3}}
\newcommand{\dtwocell}[3][0.5]{ \ar@{}[#2] { \ar@{=>}?(#1)+/u  0.15cm/;?(#1)+/d 0.15cm/^{#3}}}
\newcommand{\ultwocell}[3][0.5]{\ar@{}[#2] \ar@{=>}?(#1)+/dr  0.15cm/;?(#1)+/ul 0.15cm/^{#3}}
\newcommand{\urtwocell}[3][0.5]{\ar@{}[#2] \ar@{=>}?(#1)+/dl  0.15cm/;?(#1)+/ur 0.15cm/^{#3}}
\newcommand{\dltwocell}[3][0.5]{\ar@{}[#2] \ar@{=>}?(#1)+/ur  0.15cm/;?(#1)+/dl 0.15cm/^{#3}}
\newcommand{\drtwocell}[3][0.5]{\ar@{}[#2] \ar@{=>}?(#1)+/ul  0.15cm/;?(#1)+/dr 0.15cm/^{#3}}
\newcommand{\myar}[2]{\ar^-{#1}[#2]}
  \def\make@df@tag@@#1{%
    \gdef\df@tag{%
      \maketag@@@{\Hy@make@anchor#1}%
      \def\@currentlabel{#1}%
      \def\cref@currentlabel{[equation][2147483647][]#1}%
    }%
  }
  \def\make@df@tag@@@#1{%
    \gdef\df@tag{%
      \tagform@{\Hy@make@anchor#1}%
      \toks@\@xp{\p@equation{#1}}%
      \edef\@currentlabel{\the\toks@}%
      \edef\cref@currentlabel{[equation][2147483647][]\the\toks@}
    }%
  }
\numberwithin{thm}{subsection}
\providecommand{\corollaryname}{Corollary}
\providecommand{\definitionname}{Definition}
\providecommand{\examplename}{Example}
\providecommand{\remarkname}{Remark}
\providecommand{\theoremname}{Theorem}
\begin{document}
\subjclass[2020]{18C15, 18N15, 18N10}
\title{Generalized Nerves of Monads}
\author{Charles Walker}
\address{Department of Mathematics and Statistics, Masaryk University, Kotl{\'a}{\v r}sk{\'a}
2, Brno 61137, Czech Republic }
\email{\tt{walker@math.muni.cz}}
\thanks{This work was supported by the Operational Programme Research, Development
and Education Project \textquotedblleft Postdoc@MUNI\textquotedblright{}
(No. CZ.02.2.69/0.0/0.0/18\_053/0016952)}
\keywords{monad, distributive law}
\begin{abstract}
One interpretation of the Kleisli construction (given by Miranda and
related to work of Paré) is as a nerve sending a 2-monad $P$ to the
Kleisli double category of $P$. In this paper we find more general
nerve constructions on the 2-categories of monads, which also give
fully faithful nerve 2-functors $\mathbf{Mnd}\left(\mathcal{K}\right)\to\mathbf{Dbl}$. 
\end{abstract}

\maketitle
\tableofcontents{}

\section{Introduction}

Double categories generalize categories by allowing for two different
types of morphisms, called horizontal and vertical morphisms, and
consist of these two types of morphism as well as squares 
\[
\xymatrix@=1.5em{X\ar[rr]\ar@{~>}[dd] &  & Y\ar@{~>}[dd]\\
\\
X'\ar[rr]\urtwocell{urur}{\phi} &  & Y'
}
\]
satisfying some compositional axioms. One of the most the natural
examples of a double category, is the Kleisli double category of a
monad $P$. These double categories have been studied by Paré \cite{PareKlTalk,AdjDbl},
and for a given monad $P$ on a category $\mathcal{C}$ are the double
categories with horizontal morphisms those of $\mathcal{C}$ and vertical
morphisms $X\rightsquigarrow Y$ are morphisms $X\to PY$ in $\mathcal{C}$
(called Kleisli arrows)\footnote{Of course one can switch the horizontal and vertical morphisms, giving
the transpose.}. A square $\phi$ as on the left below

\[
\xymatrix@=1.5em{X\ar@{~>}[dd]_{f}\ar[rr]^{a} &  & X'\ar@{~>}[dd]^{g} &  &  &  & X\ar[dd]_{f}\ar[rr]^{a} &  & X'\ar[dd]^{g}\\
\\
Y\ar[rr]_{b}\urtwocell{rruu}{\phi} &  & Y' &  &  &  & PY\ar[rr]_{Pb} &  & PY'
}
\]
is then the condition that the right square above commutes. One view
of this construction given by Miranda \cite{Miranda} is as the nerve 

\[
\xymatrix@=1.5em{\mathbf{Kl}\left(\mathbf{Cat}\right)\ar[rr]^{N} &  & \left[\Delta^{\textnormal{op}},\mathbf{Cat}\right]\\
 & \Delta\ar[ur]\ar[ul]
}
\]
where $\mathbf{Kl}\left(\mathbf{Cat}\right)$ is the completion of
$\mathbf{Cat}$ under Kleisli objects as defined in \cite{StreetFTM2}.
Here the nerve factors through double categories and so may be written
\[
N\colon\mathbf{Kl}\left(\mathbf{Cat}\right)\to\mathbf{Dbl}
\]
If one switches the vertical and horizontal transformations between
double functors (equivalent to considering the transpose) one recovers
the embedding 
\[
N^{T}\colon\mathbf{Mnd}\left(\mathbf{Cat}\right)\to\mathbf{Dbl}
\]
 from the 2-category of monads $\mathbf{Mnd}\left(\mathbf{Cat}\right)$
\cite{StreetFTM2}\footnote{In the notation of \cite{StreetFTM2} this is $\mathbf{Mnd}\left(\mathbf{Cat}^{\textnormal{op}}\right)^{\textnormal{op}}$.}.

The goal of this paper is to find more general examples of fully faithful
nerves, and thus more examples of embeddings $\mathbf{Mnd}\left(\mathbf{Cat}\right)\to\mathbf{Dbl}$.
To do this, we replace the Kleisli completion $\mathbf{Kl}\left(\mathbf{Cat}\right)$
by a 2-category $\mathbf{mKl}\left(\mathbf{Cat}\right)$ which is
like $\mathbf{Kl}\left(\mathbf{Cat}\right)$ in that we have a comparison
2-functor
\[
\Phi\colon\mathbf{mKl}\left(\mathbf{Cat}\right)\to\mathbf{Kl}\left(\mathbf{Cat}\right)
\]
satisfying certain axioms. This $\Phi$ should be identity of objects
and 1-cells (and is thus specified by its action on 2-cells), and
should satisfy a number of axioms. Of these axioms, one is non-trivial,
which is that each $N\colon\mathbf{mKl}\left(\mathbf{Cat}\right)\to\mathbf{Dbl}$,
should send a monad $P$ to a double category containing all squares
of the form
\[
\xymatrix@=1.5em{X\ar@{~>}[dd]_{f}\ar[rr]^{\Phi f} &  & PY\ar@{~>}[dd]^{\epsilon_{Y}}\\
\\
Y\ar@{=}[rr] &  & Y
}
\]
for an $\epsilon_{Y}$ in $\mathbf{mKl}\left(\mathbf{Cat}\right)$
where $\Phi\epsilon_{Y}$ is the identity on $PY$ (i.e. the counit
of the Kleisli adjunction). To make this clear we briefly give some
examples:
\begin{itemize}
\item With $\Phi$ the identity, and so $\mathbf{mKl}\left(\mathbf{Cat}\right)=\mathbf{Kl}\left(\mathbf{Cat}\right)$,
it is easy to check 
\[
\xymatrix@=1.5em{X\ar@{~>}[dd]_{f}\ar[rr]^{f} &  & PY\ar@{~>}[dd]^{\epsilon_{Y}}\\
\\
Y\ar@{=}[rr] &  & Y
}
\]
with $\epsilon_{Y}=\textnormal{id}_{PY}$ is a square in the Kleisli
double category.
\item Instead of a 2-cell $X\rightsquigarrow Y$ being a Kleisli map $X\to PY$.
Take $\mathbf{mKl}\left(\mathbf{Cat}\right)$ to have 2-cells being
pairs of maps $Y\to X\to PY$ composing to the unit $\eta_{Y}$. One
may regard this as a Kleisli map with some extra structure. Here $\Phi$
simply forgets the extra structure, recovering the underlying Kleisli
map, and each $\epsilon_{Y}$ is the factorization of the unit as
$\textnormal{id}_{PY}\cdot\eta_{Y}\colon Y\to PY\to PY$. For a general
2-cell $\tau\cdot\pi\colon Y\to X\to PY$, the left square below lies
in $N\left(P\right)$ since it is a square on the Kleisli arrow component
$\tau$ (as earlier) and on the extra structure $\pi$ (by the square
on the right below).
\[
\xymatrix@=1.5em{X\ar@{~>}[dd]_{f}\ar[rr]^{f} &  & PY\ar@{~>}[dd]^{\epsilon_{Y}} &  &  & X\ar[rr]^{\tau} &  & PY\\
\\
Y\ar@{=}[rr] &  & Y &  &  & Y\ar@{=}[rr]\ar[uu]^{\pi} &  & Y\ar[uu]_{\eta_{Y}}
}
\]
As this example satisfies the properties we require, it will give
a transpose nerve functor
\[
N^{T}\colon\mathbf{Mnd}\left(\mathbf{Cat}\right)\to\mathbf{Dbl}
\]
which the reader should recognize the embedding
\[
\mathbf{Mnd}_{\times}\left(\mathbf{Cat}\right)\to\mathbf{AWFS}\to\mathbf{Dbl}
\]
given by Bourke-Garner \cite{AWFS2} in the context of algebraic weak
factorization systems (AWFS), sending a monad $P$ to the double category
of $P$-embeddings (also called $P$ split-monos)\footnote{In order to factor through $\mathbf{AWFS}$ one uses monads which
are suitably compatible with products.}.
\item One different example, though analogous in some ways to the previous
example, is to take pairs of Kleisli maps $e\colon Y\to PX$ and $s\colon X\to PY$
such that
\[
\xymatrix@=1.5em{X\ar[r]^{s}\ar[rd]_{\eta_{X}} & PY\ar[r]^{Pe} & P^{2}X\\
 & PX\ar[ur]_{\eta_{PX}}
}
\]
which we may think of a strong version of Kleisli split-epis (hence
the notation $s$ and $e$) as composition by $\mu_{X}\colon P^{2}X\to PX$
recovers the Kleisli identity arrow $\eta_{X}\colon X\to PX$. Here
an $\epsilon_{Y}$ is the pair $\eta_{PY}\cdot\eta_{Y}\colon Y\to P^{2}Y$,
$\textnormal{id}_{PY}\colon PY\to PY$, with the extra structure $e$
still giving a square since
\[
\xymatrix@=1.5em{PY\ar[rr]^{Pe} &  & P^{2}X\\
\\
X\ar@{=}[rr]\ar[uu]^{s} &  & X\ar[uu]_{\eta_{PX}\cdot\eta_{X}}
}
\]
ensuring the required conditions also hold on the non-Kleisli components.
\end{itemize}
Lastly, we mention iterated versions of these constructions. One can
iterate the Kleisli nerve to recover embeddings
\[
\mathbf{Mnd}^{n}\left(\mathbf{Cat}\right)\to\left(n+1\right)\mathbf{Fold}
\]
into $\left(n+1\right)$-fold categories \cite{Miranda}, thus sending
distributive laws to triple categories and so on. We give a more general
version of this, allowing for iteration of any of the examples of
nerves in any order.

The original motivation for this work was to use these nerves to give
simpler proofs about various results on distributive laws, however
this will now be left for a future paper.

\section{Background}

In this section we will recall the neccessary background knowledge
on internal categories, double categories, 2-categories of monads,
and completions under Kleisli objects.

\subsection{2-categories of monads in $\mathcal{K}$}

We first recall the definition of the 2-category of monads. We warn
the reader that the morphisms are defined differently to \cite{StreetFTM},
with the direction of each monad morphism's 2-cell component reversed.
\begin{rem}
In the notation of \cite{StreetFTM} the following 2-category was
denoted $\mathbf{Mnd}\left(\mathcal{K}^{\textnormal{op}}\right)^{\textnormal{op}}$,
though we will avoid the 'op's in this paper for brevity. The choice
of the direction of the 2-cell $\xi$ we are using here works most
nicely with Kleisli constructions, which is unsurprising as $\xi\colon FP\to QF$
gives a Kleisli arrow $FP\rightsquigarrow F$.
\end{rem}

\begin{defn}
Given a 2-category the $\mathcal{K}$, the 2-category $\mathbf{Mnd}\left(\mathcal{K}\right)$
of monads in $\mathcal{K}$ has:

\emph{objects:} monads which consist of data $\left(P\colon\mathcal{C}\to\mathcal{C},\eta\colon1\Rightarrow P,\mu\colon P^{2}\Rightarrow P\right)$
in $\mathcal{K}$ rendering commutative
\[
\xymatrix{P\ar[rr]^{\eta P}\ar[rrd]_{\textnormal{id}} &  & PP\ar[d]_{\mu} &  & P\ar[ll]_{P\eta}\ar[ldl]^{\textnormal{id}} &  & PPP\ar[rr]^{\mu P}\ar[d]_{P\mu} &  & PP\ar[d]^{\mu}\\
 &  & P &  &  &  & PP\ar[rr]_{\mu} &  & P
}
\]

\emph{morphisms:} a morphism $P\to Q$ is a $\xi$ as on the left
below satisfying the triangle and pentagon axioms
\[
\xymatrix@=1.5em{\mathcal{C}\ar[d]_{P}\ar[rr]^{F} &  & \mathcal{D}\ar[d]^{Q} & FP\ar[rr]^{\xi} &  & QF & FP^{2}\ar[r]^{\xi P}\ar[d]_{F\mu} & QFP\ar[r]^{Q\xi} & Q^{2}P\ar[d]^{\mu P}\\
\mathcal{C}\ar[rr]_{F}\urtwocell{urr}{\xi} &  & \mathcal{D} &  & F\ar[ur]_{\eta_{Q}F}\ar[ul]^{F\eta_{P}} &  & FP\ar[rr]_{\xi} &  & QP
}
\]

\emph{2-cells:} a 2-cell $\left(F,\xi\right)\Rightarrow\left(F',\xi'\right)\colon P\to Q$
is a map $\alpha\colon F\to F'$ such that 
\[
\xymatrix@=1.5em{FP\ar[r]^{\xi}\ar[d]_{\alpha P} & QF\ar[d]^{Q\alpha}\\
F'P\ar[r]^{\xi'} & QF'
}
\]
\end{defn}

\begin{rem}
The inclusion $\mathbf{inc}\colon\mathcal{\mathbf{Cat}}\to\mathbf{Mnd}\left(\mathbf{Cat}\right)$
is the middle of an adjoint triple $\mathbf{Kl}\dashv\mathbf{inc}\dashv\mathbf{und}$\footnote{Note in \cite{StreetFTM} the direction of 2-cells is chosen differently,
thus giving a slightly different triple involving the Eilenberg-Moore
construction.}. Here $\mathbf{und}$ is the underlying functor which takes a monad
$P$ on a category $\mathcal{C}$ to $\mathcal{C}$, and $\mathbf{Kl}$
takes a monad $P$ to the Kleisli category of $P$ denoted $\mathcal{C}_{P\textnormal{-kl}}$
or just $\mathcal{C}_{P}$. Though trivial, we will often make use
of the underlying-inclusion adjunction, which simply says that monad
morphisms $\left(\textnormal{id},\mathcal{C}\right)\to\left(Q,\mathcal{D}\right)$
out of an identity monad (which must be as below)
\[
\xymatrix@=1.5em{\mathcal{C}\ar[d]_{\textnormal{id}}\ar[rrrr]^{F} &  &  &  & \mathcal{D}\ar[d]^{Q}\\
\mathcal{C}\ar[rrrr]_{F}\urtwocell{urrrr}{\xi=\eta_{Q}F} &  &  &  & \mathcal{D}
}
\]
correspond to morphisms $F:\mathcal{C}\to\mathcal{D}$. 
\end{rem}

\subsection{The Kleisli-object completion of a 2-category $\mathcal{K}$}

Since $\mathbf{Kl}$ is a left adjoint, its existence may be defined
by a representability condition. That is to give for each monad $P$
a representing object $\mathbf{Kl}\left(P\right)$ for the copresheaf
\[
\mathcal{K}\to\mathbf{Cat}\colon\mathcal{A}\mapsto\mathbf{Mnd}\left(\mathcal{K}\right)\left(P,\textnormal{id}_{\mathcal{A}}\right)
\]
The category $\mathbf{Mnd}\left(\mathcal{K}\right)\left(P,\textnormal{id}_{\mathcal{A}}\right)$
is simply the category of $P$-opalgebras on $\mathcal{A}$, and so
$\mathbf{Kl}\left(P\right)$ is the universal $P$-opalgebra. The
Kleisli-object completion $\mathbf{Kl}\left(\mathcal{K}\right)$ of
a 2-category $\mathcal{K}$ freely completes a 2-category $\mathcal{K}$
under these Kleisli objects (universal opalgebras) \cite{StreetFTM2}.
\begin{defn}
\cite{StreetFTM2} Given a 2-category the $\mathcal{K}$, the 2-category
$\mathbf{Kl}\left(\mathcal{K}\right)$ called the Kleisli-object completion
of $\mathcal{K}$ has the same objects and 1-cells of $\mathbf{Mnd}\left(\mathcal{K}\right)$,
but a 2-cell $\left(F,\xi\right)\rightsquigarrow\left(F',\xi'\right)\colon P\to Q$
is now a map $\alpha\colon F\to QF'$ such that
\begin{equation}
\xymatrix@=1.5em{FP\ar[r]^{\xi}\ar[d]_{\alpha P} & QF\ar[r]^{Q\alpha} & Q^{2}F'\ar[d]^{\mu F'}\\
QF'P\ar[r]_{Q\xi'} & Q^{2}F'\ar[r]_{\mu F'} & QF'
}
\label{klcond}
\end{equation}
The horizontal composite of a pair of 2-cells $\alpha\colon\left(F,\xi\right)\rightsquigarrow\left(F',\xi'\right)\colon P\to Q$
and $\beta\colon\left(G,\phi\right)\rightsquigarrow\left(G',\phi'\right)\colon Q\to R$
is the map
\[
\xymatrix@=1.5em{GF\ar[r]^{G\alpha} & GQF'\ar[r]^{\phi F'} & RGF'\ar[r]^{R\beta F'} & RRG'F'\ar[rr]^{\mu G'F'} &  & RG'F'}
\]
\end{defn}

\begin{example}
Disregarding that $\mathbf{Span}\left(\mathbf{Set}\right)$ is only
a bicategory, the 2-category $\mathbf{Cat}$ is equivalent to $\mathbf{Kl}_{*}\left(\mathbf{Span}\left(\mathbf{Set}\right)\right)$
where $*$ denotes that we only take monad morphisms whose underlying
1-cell is a left adjoint span.
\end{example}

\subsection{Kleisli double categories}

We now recall the basic definitions of internal categories and double
categories, so that we may recall the notion of Kleisli double categories
and how to see them as a nerve.
\begin{defn}
A \emph{category internal to a category} $\mathcal{E}$ with pullbacks
consists of objects $\mathcal{C}_{0}$ and $\mathcal{C}_{1}$, source
and target maps $s,t\colon\mathcal{C}_{1}\to\mathcal{C}_{0}$ a unit
map $i\colon\mathcal{C}_{0}\to\mathcal{C}_{1}$ and a composition
map $\circ\colon\mathcal{C}_{1}\times_{\mathcal{C}_{0}}\mathcal{C}_{1}\to\mathcal{C}_{1}$
satisfying 
\[
\xymatrix@=1.5em{\mathcal{C}_{0}\ar[rr]^{i}\ar@{=}[rdrd] &  & \mathcal{C}_{1}\ar[dd]^{s} & \mathcal{C}_{0}\ar[rr]^{i}\ar@{=}[rdrd] &  & \mathcal{C}_{1}\ar[dd]^{t} & \mathcal{C}_{1}\times_{\mathcal{C}_{0}}\mathcal{C}_{1}\ar[rr]^{\circ}\ar[dd]_{\pi_{1}} &  & \mathcal{C}_{1}\ar[dd]^{s} & \mathcal{C}_{1}\times_{\mathcal{C}_{0}}\mathcal{C}_{1}\ar[rr]^{\circ}\ar[dd]_{\pi_{2}} &  & \mathcal{C}_{1}\ar[dd]^{s}\\
\\
 &  & \mathcal{C}_{0} &  &  & \mathcal{C}_{0} & \mathcal{C}_{1}\ar[rr]_{s} &  & \mathcal{C}_{0} & \mathcal{C}_{1}\ar[rr]_{s} &  & \mathcal{C}_{0}
}
\]
and
\[
\xymatrix@=1.5em{\mathcal{C}_{0}\times_{\mathcal{C}_{0}}\mathcal{C}_{1}\ar[r]^{i\times1}\ar[rdd]_{\pi_{2}} & \mathcal{C}_{1}\times_{\mathcal{C}_{0}}\mathcal{C}_{1}\ar[dd]^{\circ} & \mathcal{C}_{1}\times_{\mathcal{C}_{0}}\mathcal{C}_{0}\ar[l]_{1\times i}\ar[ldd]^{\pi_{1}} &  & \mathcal{C}_{1}\times_{\mathcal{C}_{0}}\mathcal{C}_{1}\times_{\mathcal{C}_{0}}\mathcal{C}_{1}\ar[rr]^{\circ\times1}\ar[dd]_{1\times\circ} &  & \mathcal{C}_{1}\times_{\mathcal{C}_{0}}\mathcal{C}_{1}\ar[dd]^{\circ}\\
\\
 & \mathcal{C}_{1} &  &  & \mathcal{C}_{1}\times_{\mathcal{C}_{0}}\mathcal{C}_{1}\ar[rr]_{\circ} &  & \mathcal{C}_{1}
}
\]
We omit the definitions of the relevant pullbacks.
\end{defn}

\begin{rem}
Categories internal to $\mathcal{E}$ embed into the category of simplicial
objects of $\mathcal{E}$ denoted $\left[\Delta^{\textnormal{op}},\mathcal{E}\right]$.
In the case $\mathcal{E}=\mathbf{Set}$ this is the usual nerve functor
\[
\xymatrix@=1.5em{\mathbf{Cat}\ar[rr] &  & \left[\Delta^{\textnormal{op}},\mathbf{Set}\right]\\
 & \Delta\ar[ur]\ar[lu]
}
\]
\end{rem}

\begin{defn}
A \emph{double category} is a category internal to $\mathbf{Cat}$.
In more detail, this consists of a set of objects, a category of horizontal
morphisms betwewen these objects, a category of vertical morphisms
between these objects, and squares of horizontal and vertical morphisms
\[
\xymatrix@=1.5em{X\ar@{~>}[dd]_{f}\ar[rr]^{a} &  & X'\ar@{~>}[dd]^{g}\\
\\
Y\ar[rr]_{b}\urtwocell{rruu}{\phi} &  & Y'
}
\]
which may be composed vertically and horizontally (satisfying compositional
axioms). In terms of the above definition, $\mathcal{C}_{0}$ is the
category of objects and horizontal morphisms, and $\mathcal{C}_{1}$
is the category of vertical morphisms and squares.
\end{defn}

\begin{rem}
One may iterate this process, taking a triple category to be a category
internal to $\mathbf{Dbl}$ (the category of double categories) and
so on. By iterating this process $n$ times one recovers the notion
of $n$-fold categories.
\end{rem}

One class of examples of double categories, is that for a monad $P$
on a category $\mathcal{C}$ we may form the Kleisli double category
of $P$, consisting of the two types of maps: arrows of $\mathcal{C}$
and $P$-Kleisli arrows of $\mathcal{C}$.
\begin{defn}
The \emph{Kleisli double category }of a monad $P$ on a category $\mathcal{C}$
consists of the objects of $\mathcal{C}$, has horizontal arrows those
of $\mathcal{C}$ and vertical morphisms $X\rightsquigarrow Y$ are
morphisms $X\to PY$ in $\mathcal{C}$ (called Kleisli arrows). A
square $\phi$ as on the left below

\[
\xymatrix@=1.5em{X\ar@{~>}[dd]_{f}\ar[rr]^{a} &  & X'\ar@{~>}[dd]^{g} &  &  &  & X\ar[dd]_{f}\ar[rr]^{a} &  & X'\ar[dd]^{g}\\
\\
Y\ar[rr]_{b}\urtwocell{rruu}{\phi} &  & Y' &  &  &  & PY\ar[rr]_{Pb} &  & PY'
}
\]
is the condition that the square on the right commutes.
\end{defn}

\begin{rem}
One may switch the horizontal and vertical morphisms of a double category,
giving the \emph{transpose }of a double category.
\end{rem}

\begin{rem}
As noted by Miranda \cite{Miranda}, the (transpose) Kleisli double
category of a monad $P$ can be seen as the nerve 2-functor
\[
\xymatrix@=1.5em{\mathbf{Kl}\left(\mathbf{Cat}\right)\ar[rr]^{N} &  & \left[\Delta^{\textnormal{op}},\mathbf{Cat}\right]\\
 & \Delta\ar[ur]\ar[lu]
}
\]
which is 2-fully faithful. The nerve here factors through double categories
and sends a monad $P$ to the double category with object of objects
$\mathcal{C}_{P\textnormal{-kl}}$ and object of morphisms $\left[2,\mathcal{C}\right]_{\left[2,P\right]\textnormal{-kl}}$
where 'kl' denotes the Kleisli category of a given monad $P$. Taking
the transpose of the nerve (equivalent to switching vertical and horizontal
transformations between double functors) we recover the embedding
$N^{T}\colon\mathbf{Mnd}\left(\mathbf{Cat}\right)\to\mathbf{Dbl}$.
\end{rem}

\begin{example}
As the nerve functor $N\colon\mathbf{Kl}\left(\mathbf{Cat}\right)\to\mathbf{Dbl}$
is fully faithful, and we have a Kleisli adjunction for a monad $P$
(now between the double category of squares and Klielsi double category),
we may consider the corresponding adjunction in the preimage. This
is given by the adjunction of monad morphisms
\[
\xymatrix@=1.5em{\mathcal{C}\ar[dd]_{1}\ar[rr]^{1} &  & \mathcal{C}\ar[dd]^{P} &  & \mathcal{C}\ar[dd]_{P}\ar[rr]^{P} &  & \mathcal{C}\ar[dd]^{1} &  & \mathcal{C}\ar[dd] &  & \mathcal{C}_{P}\ar[dd]\\
 &  &  & \dashv &  &  &  & \mapsto &  & \dashv\\
\mathcal{C}\ar[rr]_{1}\urtwocell{urur}{\eta} &  & \mathcal{C} &  & \mathcal{C}\ar[rr]_{P}\urtwocell{urur}{\mu} &  & \mathcal{C} &  & \mathcal{C}_{P} &  & \mathcal{C}
}
\]
This pair of monad morphisms allows one to go back and forth between
identity monads, and will be useful later on.\footnote{In the more general situation later on involving modified Kleisli
completions, we still have a $\epsilon\colon P\to1$, but no $\eta\colon1\to P$
and thus no adjunction of monad morphisms.}
\end{example}

\section{General Nerves}

In this section we consider more general examples of fully faithful
nerves of monads, aside from Kleisli double categories. The simple
idea is that instead of the 2-category $\mathbf{Kl}\left(\mathbf{Cat}\right)$,
we use a 2-category $\mathbf{mKl}\left(\mathbf{Cat}\right)$ which
is similar to $\mathbf{Kl}\left(\mathbf{Cat}\right)$ in that one
has a comparison 2-functor $\Phi\colon\mathbf{mKl}\left(\mathbf{Cat}\right)\to\mathbf{Kl}\left(\mathbf{Cat}\right)$
satisfying certain axioms. The notation 'mkl' is to denote a modified
version of the Kleisli completion. Moreover, to be more general and
also allow for iteration we will replace $\mathbf{Cat}$ by a more
general 2-category $\mathcal{K}$ satisfying some properties.

The following describes a set of axioms which will suffice to construct
such a fully faithful nerve functor on 2-categories of monads. Whilst
it may seem a lot of axioms, the majority of the conditions are trivial.
The central axiom is the existence of squares 
\[
\xymatrix@=1.5em{X\ar@{~>}[dd]_{f}\ar[rr]^{\Phi f} &  & PY\ar@{~>}[dd]^{\epsilon_{Y}}\\
\\
Y\ar@{=}[rr] &  & Y
}
\]
in the below, and this is what one checks when looking for examples.
It would not be surprising if some of the other axioms turn out to
be redundant.
\begin{thm}
Suppose we are given a 2-category $\mathcal{K}$, a 2-category $\mathbf{mKl}\left(\mathcal{K}\right)$,
a 2-functor $\Delta\to\mathcal{K}$ and an identity on objects, identity
on 1-cells 2-functor
\[
\Phi\colon\mathbf{mKl}\left(\mathcal{K}\right)\to\mathbf{Kl}\left(\mathcal{K}\right)
\]
with nerve functor denoted $N\colon\mathbf{mKl}\left(\mathcal{K}\right)\to\left[\Delta^{\textnormal{op}},\mathbf{Cat}\right]$.
Suppose that:
\begin{enumerate}
\item the 2-functor $\Delta\to\mathcal{K}$ is nice in that:
\begin{enumerate}
\item the nerve
\[
n\colon\mathcal{K}\to\left[\Delta^{\textnormal{op}},\mathbf{Cat}\right]
\]
factors through $\mathbf{Dbl}$ and is fully faithful; 
\item there exists double functors natural in $\mathcal{C}$ of the form\footnote{In $\mathbf{Cat}$ 2-cells between 1-cells $\mathbf{1}\to\mathcal{C}$
are the same as arrows (that is maps $\text{\ensuremath{\mathbf{2}}}\to\mathcal{C}$).
However, for a general $\mathcal{K}$ this may not be true, hence
the need for this axiom. The notation $\left(\textnormal{ob }\mathcal{K}\left(1,\mathcal{C}\right):\textnormal{ob }\mathcal{K}\left(2,\mathcal{C}\right)\right)$
refers to the category with objects $\textnormal{ob }\mathcal{K}\left(1,\mathcal{C}\right)$
and morphisms $\textnormal{ob }\mathcal{K}\left(2,\mathcal{C}\right)$.}
\[
\mathbf{Sq}\;\mathcal{K}\left(1,\mathcal{C}\right)\to n\mathcal{C}\to\mathbf{Sq}\left(\textnormal{ob }\mathcal{K}\left(1,\mathcal{C}\right):\textnormal{ob }\mathcal{K}\left(2,\mathcal{C}\right)\right)
\]
\end{enumerate}
\item the 2-functor $\Phi\colon\mathbf{mKl}\left(\mathcal{K}\right)\to\mathbf{Kl}\left(\mathcal{K}\right)$
is nice in that:
\begin{enumerate}
\item for all monads $P$ there exists a 2-cell $\epsilon\colon P\rightsquigarrow\textnormal{id}_{\mathcal{C}}$
in $\mathbf{mKl}\left(\mathcal{K}\right)$ such that $\Phi\epsilon\colon P\nrightarrow\textnormal{id}_{\mathcal{C}}$
in $\mathbf{Kl}\left(\mathcal{K}\right)$ is the identity on $P$;
\item for all monads $P$ on $\mathcal{C}$ and $f\colon X\rightsquigarrow Y\colon\mathbf{1}\to\left(P,\mathcal{C}\right)$,
there exists squares
\begin{equation}
\xymatrix@=1.5em{X\ar@{~>}[dd]_{f}\ar[rr]^{\Phi f} &  & PY\ar@{~>}[dd]^{\epsilon_{Y}}\\
\\
Y\ar@{=}[rr] &  & Y
}
\label{mainax}
\end{equation}
\end{enumerate}
\item the trivial axioms that:
\begin{enumerate}
\item for all squares
\[
\xymatrix@=1.5em{X\ar@{~>}[dd]_{f}\ar@{=}[rr] &  & X\ar@{~>}[dd]^{g}\\
\\
Y\ar@{=}[rr] &  & Y
}
\]
we have $f=g$;
\item every $\epsilon_{X}$ comprises a square
\[
\xymatrix@=1.5em{PX\ar@{~>}[dd]_{\epsilon_{X}}\ar[rr]^{\eta_{PX}} &  & P^{2}X\ar@{~>}[dd]^{P\epsilon_{X}}\\
\\
X\ar[rr]_{\eta_{X}} &  & PX
}
\]
\item the 2-functor
\[
\mathbf{mKl}\left(\mathcal{K}\right)\left[\mathbf{2},-\right]\colon\mathbf{mKl}\left(\mathcal{K}\right)\to\mathbf{Cat}
\]
is locally fully faithful;\footnote{This axiom will hold in most examples where the 2-cells of $\mathbf{mKl}\left(\mathcal{K}\right)$
have a natural definition.}
\item the nerve 2-functor $N\colon\mathbf{mKl}\left(\mathcal{K}\right)\to\mathbf{Dbl}$
is fully faithful on underlying 1-cells\footnote{These are the monad morphisms out of identity monads, which appear
in the underlying-inclusion adjunction. This is easy to check as these
are all trivial monad morphisms.}
\end{enumerate}
\end{enumerate}
Then the nerve $N\colon\mathbf{mKl}\left(\mathcal{K}\right)\to\mathbf{Dbl}$
is $\epsilon$-fully faithful. That is, full on double functors $F\colon\mathscr{C}_{P\textnormal{-mkl}}\to\mathscr{D}_{Q\textnormal{-mkl}}$
which are determined by their action on the class of morphisms $\left\{ PX\overset{\epsilon_{X}}{\rightsquigarrow}X\overset{f}{\rightsquigarrow}Y\right\} \cup\left\{ PX\overset{Pf}{\rightsquigarrow}PY\right\} $\footnote{The class of morphisms including those of the form $PX\overset{\epsilon_{X}}{\rightsquigarrow}X\overset{f}{\rightsquigarrow}Y$
and $PX\overset{Pf}{\rightsquigarrow}PY$ is quite large, and so it
is expected that any reasonable double functor will be determined
by its action on this class. It is likely that all such double functors
are determined on this class, in which case $\epsilon$-fully faithfulness
is the same as fully faithfulness (though we have no proof of this).
Any counterexample would need to involve a very unusual double functor.}.
\end{thm}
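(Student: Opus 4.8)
The plan is to establish the three ingredients of $\epsilon$-full faithfulness in turn: injectivity of $N$ on $1$-cells (monad morphisms), full faithfulness on $2$-cells, and $\epsilon$-fullness onto the distinguished class of double functors. The organizing idea is that a monad morphism $(F,\xi)\colon P\to Q$ is recorded in the double functor $N(F,\xi)$ by exactly two pieces of data: the underlying $1$-cell $F$, read off from the horizontal (object) part, and the family $\xi$, read off from the action on the distinguished vertical morphisms $\epsilon_X$.

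For injectivity on $1$-cells I would recover $(F,\xi)$ from $N(F,\xi)$ as follows. The horizontal action of $N(F,\xi)$ is given by $n$ applied to $F$, so full faithfulness of the nerve $n$ (axiom (1)(a)) together with axiom (3)(d) returns the $1$-cell $F$ in $\mathcal K$. For $\xi$, note that $N(F,\xi)$ sends $\epsilon_X\colon PX\rightsquigarrow X$ to a vertical morphism $FPX\rightsquigarrow FX$ of $NQ$ whose underlying Kleisli datum (its $\Phi$-image, a map $FPX\to QFX$ available by axiom (2)(a)) is forced to be $\xi_X$; the rigidity axiom (3)(a) guarantees that vertical morphisms are pinned down by such identity-bordered squares, so this reading is unambiguous. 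Hence two monad morphisms with the same nerve agree on both $F$ and $\xi$ and coincide. Faithfulness and fullness on $2$-cells I would then obtain directly from axiom (3)(c): a monad transformation is a map $\alpha\colon F\to F'$ subject to a compatibility square, and local full faithfulness of $\mathbf{mKl}(\mathcal K)[\mathbf 2,-]$ identifies these with the corresponding double transformations, with axiom (1)(b) supplying the passage between $2$-cells and arrows of $\mathcal K(\mathbf 2,\mathcal C)$.

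The substantive case is $\epsilon$-fullness. Given a double functor $\bar F\colon NP\to NQ$ in the distinguished class, I would define a candidate $1$-cell $F$ from its horizontal action (axioms (1)(a), (3)(d)) and a candidate family $\xi_X:=\Phi(\bar F\epsilon_X)$ from its action on the $\epsilon$'s. The next step is to check that $(F,\xi)$ is a genuine monad morphism, i.e. satisfies the triangle and pentagon laws. The triangle law I expect to extract from $\bar F$ applied to the structural square of axiom (3)(b), which ties $\epsilon_X$ to the units $\eta_X,\eta_{PX}$ and to $P\epsilon_X$, and so forces $\xi$ to respect units; the pentagon law should follow analogously from preservation of the vertical composite of $\epsilon$-squares, which encodes the compatibility of $\epsilon$ with $\mu$.

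It remains to show $N(F,\xi)=\bar F$. I would do this by comparing the two double functors on the generating class $\{PX\overset{\epsilon_X}{\rightsquigarrow}X\overset{f}{\rightsquigarrow}Y\}\cup\{PX\overset{Pf}{\rightsquigarrow}PY\}$: they agree on each $\epsilon_X$ by the definition of $\xi$, on each horizontal $\Phi f$ by the definition of $F$, and the main axiom square \eqref{mainax} rewrites every such composite in terms of these together with the unit, so that preservation of squares and of vertical composition by both functors propagates the agreement across the whole class. Since $\bar F$ is assumed to be determined by its action on this class, and $N(F,\xi)$ realizes the same action, the two must be equal. I expect the principal obstacle to be precisely this last matching step: unwinding how \eqref{mainax} reconstructs an arbitrary vertical morphism from its $\Phi$-component and the universal $\epsilon_Y$, and verifying that $N(F,\xi)$ honours that reconstruction. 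This is exactly where the gap between honest fullness and $\epsilon$-fullness resides, since in the absence of the determination hypothesis one cannot guarantee that agreement on the generating class forces global agreement of double functors.
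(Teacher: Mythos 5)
Your overall strategy coincides with the paper's: invert the nerve by setting $\xi_X:=\Phi(\bar F\epsilon_X)$, recover $F$ from the horizontal part, handle 2-cells via local full faithfulness at $\mathbf{2}$, and compare $N(F,\xi)$ with $\bar F$ on the determination class. But there is a genuine gap at the heart of the fullness argument, in the sentence ``they agree on each $\epsilon_X$ by the definition of $\xi$''. The definition of $\xi$ only pins down the \emph{Kleisli image} $\Phi(\bar F\epsilon_X)$; it does not pin down $\bar F\epsilon_X$ itself as a vertical morphism of $\mathscr{D}_{Q\textnormal{-mkl}}$, because $\Phi$ is nowhere assumed to be locally faithful --- and in the motivating examples it genuinely is not (it forgets the splitting $\pi$ of a $P$-embedding, resp.\ the section $s$ of a strong Kleisli split-epi). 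So from $\Phi(\bar F\epsilon_X)=\xi_X=\Phi\bigl((F,\xi)\epsilon_X\bigr)$ you cannot conclude $\bar F\epsilon_X=(F,\xi)\epsilon_X$, and without that equality the comparison of the two double functors on the class $\left\{ PX\overset{\epsilon_{X}}{\rightsquigarrow}X\overset{f}{\rightsquigarrow}Y\right\}$ never gets off the ground. The paper spends an entire displayed diagram on exactly this point: it applies $\bar F$ to the structural square of axiom (3)(b), identifies the middle leg $\bar F(P\epsilon_X)$ with $(F,\xi)P\epsilon_X$ using the factorization \eqref{facund} through the underlying together with axiom (3)(d), pastes with the whiskered square expressing compatibility of $\epsilon$ with $\mu$, and only then invokes the rigidity axiom (3)(a) on the resulting square with identity horizontal boundary to force $\bar F\epsilon_X=(F,\xi)\epsilon_X$. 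Tellingly, axioms (3)(a) and (3)(b) are used \emph{only} here, so a proposed proof that never invokes them cannot be complete. You correctly sense that the ``matching step'' is the crux, but you locate the difficulty in the determination hypothesis (which is simply an assumption granted by $\epsilon$-fullness) rather than in this identity.

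Two smaller omissions of the same flavour. First, you never argue that $\bar F$ agrees with $N(F,\xi)$ on the morphisms $PX\overset{Pf}{\rightsquigarrow}PY$: these are vertical, not horizontal, so ``the definition of $F$'' does not cover them; the paper obtains them from \eqref{facund} and axiom (3)(d), and then uses the middle-four-interchange identity $f\circ\epsilon_X=\epsilon_Y\circ Pf$ to handle the composites. Second, before checking the triangle and pentagon laws you need $\xi$ to exist as a 2-cell $FP\Rightarrow QF$ in $\mathcal{K}$ at all: for a general 2-category $\mathcal{K}$ the objectwise family $\Phi(\bar F\epsilon_X)$, indexed by generalized objects $\mathbf{1}\to\mathcal{C}$, is not automatically a 2-cell of $\mathcal{K}$; the paper assembles it from a double natural transformation built out of $\epsilon$, using fullness on underlying monad morphisms (3)(d), local full faithfulness at $\mathbf{2}$ (3)(c), and axiom (1). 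Your derivation of the monad-morphism axioms themselves (triangle from the unit square, pentagon from the $\mu$-compatible composite of $\epsilon$'s, both pushed through $\bar F$ and $\Phi$ and read off via full faithfulness of $n$) does match the paper once these repairs are made.
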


\begin{proof}
We first consider the nerve $N\colon\mathbf{mKl}\left(\mathbf{Cat}\right)\to\mathbf{Dbl}$
on 1-cells. This is defined by the assignation

\[
\xymatrix@=1.5em{P\ar[dd]_{\left(F,\xi\right)} &  & \mathscr{C}_{P\textnormal{-stkl}}\ar[dd]_{F_{dbl}} &  & X\ar@{~>}[rr]^{f} &  & Y\\
 & \mapsto &  & :\\
Q &  & \mathscr{D}_{Q\textnormal{-stkl}} &  & FX\ar@{~>}[rr]^{\left(F,\xi\right)f} &  & FY
}
\]
that is whiskering with $\left(F,\xi\right)$, and this has the candidate
inverse assignation

\[
\xymatrix@=1.5em{\mathscr{C}_{P\textnormal{-stkl}}\ar[dd]_{F_{dbl}} &  & \mathscr{C}\ar[dd]_{F} & FPX\ar[dd]^{\Phi F_{dbl}\epsilon_{X}}\\
 & \mapsto\\
\mathscr{D}_{Q\textnormal{-stkl}} &  & \mathscr{D} & QFX
}
\]
We must check that $\xi:=\Phi F_{dbl}\epsilon$ is a well defined
2-cell $FP\Rightarrow QF$ in $\mathcal{K}$. To do this, note we
have the double natural transformation
\[
\xymatrix@=1.5em{\mathcal{C}_{1-mkl}\ar[r] & \mathcal{C}_{P-mkl}\ar@/^{1pc}/[rr]^{P}\ar@/_{1pc}/[rr]_{1} & \dtwocell{}{\epsilon} & \mathcal{C}_{P-mkl}\ar[r]^{F} & \mathcal{D}_{Q-mkl}}
\]
which consists of components
\[
\xymatrix@=1.5em{\mathbf{mKl}\left(2,1_{\mathcal{C}}\right)\ar@/^{1pc}/[rr]^{FP}\ar@/_{1pc}/[rr]_{F} & \dtwocell{}{F\epsilon} & \mathbf{mKl}\left(2,\mathcal{Q}\right) &  & \textnormal{in }\mathbf{Cat}}
\]
which corresponds to (using fullness on underlying monad morphisms
and local fully faithfulness at $\mathbf{2}$)
\[
\xymatrix@=1.5em{1_{\mathcal{C}}\ar@/^{1pc}/[rr]^{FP}\ar@/_{1pc}/[rr]_{F} & \dtwocell{}{F\epsilon} & \mathcal{Q} &  & \textnormal{in }\mathbf{\mathbf{mKl}\left(\mathcal{K}\right)}}
\]
giving by $\Phi$ the Kleisli map $FP\Rightarrow F$ i.e. 2-cell $FP\Rightarrow QF$.
The monad morphism axioms follow from applying $F$ and $\Phi$ to
\[
\xymatrix@=1.5em{X\ar[rr]^{\eta_{X}}\ar@{=}[dd] &  & PX\ar@{~>}[dd]^{\epsilon_{X}} &  &  & P^{2}X\ar[rr]^{\mu_{X}}\ar@{~>}[d]_{\epsilon_{PX}} &  & PX\ar@{~>}[dd]^{\epsilon_{X}}\\
 &  &  &  &  & PX\ar@{~>}[d]_{\epsilon_{X}}\\
X\ar@{=}[rr] &  & X &  &  & X\ar@{=}[rr] &  & X
}
\]
and using that $n$ is fully faithful. For faithfulness, we must show
$\left(F,\xi\right)=\left(F,\Phi F_{dbl}\epsilon\right)$ for any
$F_{dbl}$ which is equal to whiskering by some $\left(F,\xi\right)$.
Clearly the underlying $F$ is sent to $F$, moreover for all $X\in\mathscr{C}$
\[
\Phi F_{dbl}\epsilon_{X}=\Phi\left[\left(F,\xi\right)\epsilon_{X}\right]=\Phi\left(F,\xi\right)\Phi\epsilon_{X}=\left(F,\xi\right)\textnormal{id}_{X}=\xi_{X}
\]
We now check fullness, and suppose we are given a double functor $F_{dbl}\colon\mathscr{C}_{P\textnormal{-mkl}}\to\mathscr{D}_{Q\textnormal{-mkl}}$
which is completely determined by its action on the class $\left\{ PX\overset{\epsilon_{X}}{\rightsquigarrow}X\overset{f}{\rightsquigarrow}Y\right\} \cup\left\{ PX\overset{Pf}{\rightsquigarrow}PY\right\} $.
To show this double functor lies in the image of the nerve, we must
check that $F_{dbl}$ is equal to the operation of whiskering by $\left(F,\xi\right)$
(with $\xi\colon=\Phi F_{dbl}\epsilon_{X}$). We need only show this
for compisites $PX\overset{\epsilon_{X}}{\rightsquigarrow}X\overset{f}{\rightsquigarrow}Y$.
Firstly, we see that $F_{dbl}\epsilon_{X}=\left(F,\xi\right)\epsilon_{X}$
from the diagram 
\[
\xymatrix@=1.5em{FPX\ar[dd]_{F_{\textnormal{dbl}}\epsilon_{X}}\ar[rr]^{F\eta PX} &  & FP^{2}X\ar[dd]^{FP\epsilon=\left(F,\xi\right)P\epsilon}\ar[rr]^{F\mu X} &  & FPX\ar[dddd]^{\left(F,\xi\right)\epsilon_{X}}\\
\\
FX\ar[rr]_{F\eta X} &  & FPX\ar[dd]^{\left(F,\xi\right)\epsilon}\\
\\
FX\ar@{=}[rr]\ar@{=}[uu] &  & FX\ar@{=}[rr] &  & FX
}
\]
and the fact that $FP$ lies in the image, because it factors through
an underlying as 
\begin{equation}
\xymatrix@=1.5em{\mathscr{C}_{P\textnormal{-mkl}}\ar[r] & \mathscr{C}_{1\textnormal{-mkl}}\ar[r] & \mathscr{C}_{P\textnormal{-mkl}}\ar[r]^{F} & \mathscr{D}_{Q\textnormal{-mkl}}}
\label{facund}
\end{equation}
Note also that by middle four interchange we have
\[
\xymatrix@=1.5em{PX\ar[d]^{Pf} &  & PX\ar[d]^{\epsilon}\\
PY\ar[d]^{\epsilon_{Y}} & = & X\ar[d]^{f}\\
Y &  & Y
}
\]
Now as $F_{dbl}$ is determined on $\epsilon$ (by the above argument)
as well as $Pf$ (by \ref{facund}), it is thus determined on composites
as on the right above. Hence any double functor which is determined
by its action on the class $\left\{ PX\overset{\epsilon_{X}}{\rightsquigarrow}X\overset{f}{\rightsquigarrow}Y\right\} $
lies in the image.

We now consider the 2-cell aspect of 
\[
N\colon\mathbf{mKl}\left(\mathcal{K}\right)\to\mathbf{Dbl}
\]
and show that it is bijective on 2-cells (locally fully faithful).
A 2-cell $\alpha$ in $\mathbf{mKl}\left(\mathcal{K}\right)$ is assigned
as below

\[
\xymatrix@=1.5em{P\ar@/_{1pc}/[dd]_{\left(F,\xi\right)}\ar@/^{1pc}/[dd]^{\left(F',\xi'\right)} &  &  & \mathscr{C}_{P\textnormal{-stkl}}\ar@/_{1pc}/[dd]_{F_{dbl}}\ar@/^{1pc}/[dd]^{F_{dbl}'}\\
\rtwocell{}{\alpha} &  & \mapsto & \rtwocell{}{\alpha_{dbl}}\\
Q &  &  & \mathscr{D}_{Q\textnormal{-stkl}}
}
\]
where $\alpha_{dbl}$ is comprised of (recall $\mathscr{C}_{P\textnormal{-stkl}}$
has object of objects $\mathbf{mKl}\left(\mathcal{K}\right)\left[\mathbf{1},\mathscr{C}\right]$
and object of morphisms $\mathbf{mKl}\left(\mathcal{K}\right)\left[\mathbf{2},\mathscr{C}\right]$)
the natural transformations 

\[
\xymatrix@=1.5em{\mathbf{mKl}\left(\mathcal{K}\right)\left[\mathbf{1},\mathscr{C}\right]\ar@/_{1pc}/[dd]_{\circ\left(F,\xi\right)}\ar@/^{1pc}/[dd]^{\circ\left(F',\xi'\right)} &  &  & \mathbf{mKl}\left(\mathcal{K}\right)\left[\mathbf{2},\mathscr{C}\right]\ar@/_{1pc}/[dd]_{\circ\left(F,\xi\right)}\ar@/^{1pc}/[dd]^{\circ\left(F',\xi'\right)}\\
\rtwocell{}{\circ\alpha} &  &  & \rtwocell{}{\circ\alpha}\\
\mathbf{mKl}\left(\mathcal{K}\right)\left[\mathbf{1},\mathscr{D}\right] &  &  & \mathbf{mKl}\left(\mathcal{K}\right)\left[\mathbf{2},\mathscr{D}\right]
}
\]
and note the first is determined by the second (as we can restrict
along $1\to2$). The nerve being assumed locally fully faithful at
$\mathbf{2}$ then gives the result.
\end{proof}
\begin{rem}
It is useful to first consider the restriction to the identity monads
\[
\xymatrix@=1.5em{\mathbf{m}\left(\mathcal{K}\right)\ar[rr]^{\overline{\Phi}}\ar[d] &  & \mathcal{K}\ar[d]\\
\mathbf{mKl}\left(\mathcal{K}\right)\ar[rr]_{\Phi} &  & \mathbf{Kl}\left(\mathcal{K}\right)
}
\]
in checking the axioms and looking for examples. In particular, one
first checks horizontal composition of 2-cells in the simpler 2-category
$\mathbf{m}\left(\mathcal{K}\right)$.
\end{rem}

\begin{rem}
An additional functoriality axiom one might assume is 
\[
\xymatrix@R=1em{X\ar@{~>}[dd]_{f}\ar[r]^{\Phi f} & PY\ar@{~>}[dd]^{\epsilon_{Y}}\ar[rr]^{P\Phi g=\Phi Pg} &  & P^{2}Z\ar@{~>}[dd]_{\epsilon_{PZ}}\ar[r]^{\mu_{Z}} & PZ\ar@{~>}[dddd]^{\epsilon_{Z}} &  & X\ar@{~>}[dddd]_{gf}\ar[r]^{\Phi gf} & PZ\ar@{~>}[dddd]^{\epsilon_{Z}}\\
\\
Y\ar@{=}[r]\ar@{~>}[dd]_{g} & Y\ar[rr]^{\Phi g} &  & PZ\ar@{~>}[dd]_{\epsilon_{Z}} &  & =\\
\\
Z\ar@{=}[rrr] &  &  & Z\ar@{=}[r] & Z &  & Z\ar@{=}[r] & Z
}
\]
though this does not appear to be a necessary condition.
\end{rem}

Finally, we mention the transpose of the nerve which arises from switching
the horizontal and vertical transformations between double functors.
Provided that this functor $N^{T}\colon\mathbf{Mnd}\left(\mathcal{K}\right)\to\mathbf{Dbl}$
is well defined, it will be fully faithful.
\begin{cor}
Suppose further that for all $\alpha\colon\left(F,\xi\right)\to\left(G,\xi'\right)\colon\mathscr{C}\to\mathscr{D}$
(a monad 2-cell), and $\rho\colon X\rightsquigarrow Y\colon\mathbf{1}\to\mathscr{C}$
there exists a family of squares 
\[
\xymatrix@=1.5em{FX\ar@{~>}[dd]_{\left(F,\xi\right)\rho}\ar[rr]^{\alpha_{X}} &  & GX\ar@{~>}[dd]^{\left(G,\xi'\right)\rho}\\
 & \alpha_{\rho}\\
FY\ar[rr]_{\alpha_{Y}} &  & GY
}
\]
which compose vertically and horizontally, then the transpose $N^{T}\colon\mathbf{Mnd}\left(\mathcal{K}\right)\to\mathbf{Dbl}$
is also $\epsilon$-fully faithful.\footnote{If $\mathcal{K}$ is not assumed to be $\mathbf{Cat}$, it can be
useful to upgrade $\mathbf{Dbl}$ to $\mathbf{Tpl}$ as the nerve
will forget some structure.}
\end{cor}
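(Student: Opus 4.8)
The plan is to deduce the Corollary from the Theorem by transposing its proof, with the extra hypothesis supplying exactly the square-components needed to make $N^{T}$ well defined on $2$-cells. Since $\mathbf{Mnd}\left(\mathcal{K}\right)$ and $\mathbf{mKl}\left(\mathcal{K}\right)$ share the same objects and $1$-cells, and since (as noted) passing to the transpose merely switches which transformations between double functors are used as $2$-cells and does not alter the double functors themselves, the object- and $1$-cell-assignments of $N^{T}$ coincide with those of $N$. Consequently the $\epsilon$-fullness and the faithfulness on $1$-cells established in the Theorem transfer verbatim to $N^{T}$, and the only genuinely new content is the bijection on $2$-cells in the transpose direction.

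First I would make $N^{T}$ well defined on $2$-cells. A monad $2$-cell $\alpha\colon\left(F,\xi\right)\to\left(G,\xi'\right)$ has an underlying family $\alpha_{X}\colon FX\to GX$, which are ordinary arrows of $\mathscr{D}$ and hence vertical morphisms of $N\left(Q\right)$; these will serve as the object-components of a vertical transformation $F_{dbl}\Rightarrow G_{dbl}$, while the assumed squares $\alpha_{\rho}$, indexed by the horizontal morphisms $\rho\colon X\rightsquigarrow Y$ of $N\left(P\right)$ (that is, the $\mathbf{mKl}$-arrows), furnish its square-components. The hypothesis that the $\alpha_{\rho}$ compose vertically and horizontally is precisely the assertion that this data is functorial, and I would check that the vertical-transformation axioms reduce, via the internal-category description of $\mathbf{Dbl}$, to this composition property together with the naturality of $\alpha$ in the vertical direction (which is the ordinary naturality of $\alpha\colon F\Rightarrow G$) and the monad-morphism compatibility of $\alpha$ with $\xi,\xi'$. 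Functoriality of $\alpha\mapsto N^{T}\alpha$ on the two composites of $2$-cells then follows from the stability of the $\alpha_{\rho}$ under horizontal and vertical composition, so that $N^{T}$ is a genuine $2$-functor.

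It then remains to show that $\alpha\mapsto N^{T}\alpha$ is a bijection between monad $2$-cells and vertical transformations $F_{dbl}\Rightarrow G_{dbl}$, argued exactly as in the corresponding step of the Theorem, transposed. Injectivity is immediate: the object-components of $N^{T}\alpha$ are the $\alpha_{X}$, and a monad $2$-cell is determined by its underlying natural transformation, so $N^{T}\alpha=N^{T}\beta$ forces $\alpha=\beta$. For surjectivity I would begin from an arbitrary vertical transformation $\Theta$, which in the internal-category picture is a functor $\mathbf{mKl}\left(\mathcal{K}\right)\left[\mathbf{1},\mathscr{C}\right]\to\mathbf{mKl}\left(\mathcal{K}\right)\left[\mathbf{2},\mathscr{D}\right]$ with source $\circ\left(F,\xi\right)$ and target $\circ\left(G,\xi'\right)$. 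Its object-components read off a family $\alpha_{X}\colon FX\to GX$; compatibility of $\Theta$ with the vertical morphisms and squares of $N\left(P\right)$ makes this family natural, and its compatibility with the $\mathbf{mKl}$-arrows underlying the monad structure (in particular the $\epsilon_{Y}$) forces the monad-morphism condition, so that $\alpha$ is a monad $2$-cell. Finally, the local full faithfulness of $\mathbf{mKl}\left(\mathcal{K}\right)\left[\mathbf{2},-\right]$, exactly as used in the Theorem, identifies the square-components of $\Theta$ with those of $N^{T}\alpha$, giving $N^{T}\alpha=\Theta$.

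The main obstacle I expect is the surjectivity step, and specifically the passage from a bare vertical transformation to a monad $2$-cell: one must see that the object-components of $\Theta$ genuinely satisfy the monad-morphism square for $\xi$ and $\xi'$, and that the square-components are then forced rather than being extra data. This is the transpose of the delicate point in the Theorem, and it is here that the universal cells $\epsilon_{Y}$ together with their defining squares, and the local full faithfulness at $\mathbf{2}$, do the real work; the remaining verifications are the routine transposition of the $1$-cell argument already carried out for $N$.
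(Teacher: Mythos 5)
Your proposal is correct and matches the paper's own (essentially unwritten) argument: the corollary is stated without proof, justified only by the preceding remark that once the hypothesis makes $N^{T}$ well defined on 2-cells, the theorem's 1-cell conclusions transfer unchanged (transposition alters neither the double functors nor the distinguished class of morphisms used for $\epsilon$-fullness) and the 2-cell bijection follows by transposing the theorem's local argument. Your expansion of the surjectivity step---reading off the family $\alpha_{X}$ from object-components, deriving the monad 2-cell condition from the naturality square at $\epsilon_{Y}$ via the identity $\xi=\Phi F_{dbl}\epsilon$ established in the theorem, and then matching the square-components---is the natural fleshing-out of that sketch, at the paper's own level of rigor (for a general $\mathcal{K}$, rather than $\mathbf{Cat}$, one would additionally invoke the full faithfulness of $n$ and the comparison double functors of axiom 1(b) to see that the family $\alpha_{X}$ assembles into an honest 2-cell of $\mathcal{K}$, exactly as the theorem's proof does for $\xi$).
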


\section{Examples}

We now give some examples of various nerves on 2-categories of monads.
This means we must give examples of 2-categories $\mathbf{mKl}\left(\mathcal{K}\right)$
satisfying these given axioms. We will take $\mathcal{K}=\mathbf{Cat}$
here for simplicity. Of course the simplest example is to take $\mathbf{mKl}\left(\mathbf{Cat}\right)$
to be $\mathbf{Kl}\left(\mathbf{Cat}\right)$, in which case we have
the fully faithful nerve $N:\mathbf{Kl}\left(\mathbf{Cat}\right)\to\mathbf{Dbl}$
and its transpose $N^{T}:\mathbf{Mnd}\left(\mathbf{Cat}\right)\to\mathbf{Dbl}$
sending a monad $P$ to the Kleisli double category of $P$.

Recall that $\mathbf{mKl}\left(\mathbf{Cat}\right)$ has the same
objects and 1-cells as $\mathbf{Kl}\left(\mathbf{Cat}\right)$, thus
we need only specify the 2-cells of $\mathbf{mKl}\left(\mathbf{Cat}\right)$.
Typically in defining this 2-category, horizontal composition of 2-cells
is the most difficult calculation to verify, though it is true in
each of our examples satisfying axiom \eqref{mainax}. It is left
as an open question if condition \eqref{mainax} yields closure of
the 2-cells under horizontal compositon in general.

\subsection{Double categories of $P$-embeddings}

Let us now consider the construction that turns up in the setting
of AWFS \cite{AWFS2}. Here we take a 2-cell of $\mathbf{mKl}\left(\mathbf{Cat}\right)$
of the form $f\colon X\rightsquigarrow Y\colon\left(T,\mathcal{A}\right)\to\left(P,\mathcal{C}\right)$
to be a pair of maps $\pi\colon Y\to X$ and $\tau\colon X\to PY$
composing to the unit $\eta_{Y}$ (such pairs factoring a unit map
are also called $P$-split monos). The Kleisli arrow component $\tau$
must satisfy condition \eqref{klcond}. 
\begin{rem}
Another presentation of this data (which appears in \cite{WalkerDL})
is as a pair of maps $L\colon Y\to X$ and $\textnormal{res}_{L}\colon PX\to PY$
where $\textnormal{res}_{L}\cdot PL=\textnormal{id}$ and $\mathbf{res}_{L}$
is a $P$-homomorphism. This corresponds the choice of the free algebra
presentation or Kleisli arrow presentation of Kleisli categories.
It is clear from this presentation we have closure under vertical
composition of 2-cells.
\end{rem}

We leave it to the reader to check that the 2-cells are also closed
under horizontal composition and whiskering by monad morphisms. The
map $\Phi\colon\mathbf{mKl}\left(\mathbf{Cat}\right)\to\mathbf{Kl}\left(\mathbf{Cat}\right)$
simply forgets the extra structure sending $\left(\pi,\tau\right)$
to $\tau$, and each $\epsilon_{Y}$ is the factorization of the unit
as $\textnormal{id}_{PY}\cdot\eta_{Y}\colon Y\to PY\to PY$.

A general square in the double category $N\left(P\right)$ denoted
as on the left below, is such that both squares on the right below
commute
\[
\xymatrix@=1.5em{X\ar@{~>}[dd]_{\left(\pi,\tau\right)}\ar[rr]^{f} &  & X'\ar@{~>}[dd]^{\left(\pi',\tau'\right)} &  &  & X\ar@{~>}[dd]_{\tau}\ar[rr]^{f} &  & X'\ar@{~>}[dd]^{\tau'} & X\ar[rr]^{f} &  & X'\\
 &  &  &  & \colon\\
Y\ar[rr]_{g} &  & Y' &  &  & PY\ar[rr]_{Pg} &  & PY' & Y\ar[rr]_{g}\ar[uu]^{\pi} &  & Y'\ar[uu]_{\pi'}
}
\]
and clearly we have a square of the form \eqref{mainax} for any $\left(\pi,\tau\right)$.
The other axioms are all trivial conditions with any reasonable example,
and so we omit their verification. It follows that we have a fully
faithful nerve and nerve-transpose $N\colon\mathbf{mKl}\left(\mathbf{Cat}\right)\to\mathbf{Dbl}$
and $N^{T}\colon\mathbf{Mnd}\left(\mathbf{Cat}\right)\to\mathbf{Dbl}$.
\begin{rem}
One possible generalization is to take a 2-cell the be the data $\pi_{i}\colon Y\to X$
for $1\leq i\leq n$ and $\tau\colon X\to PY$ where every $\tau\cdot\pi_{i}$
is equal to the unit $\eta_{Y}$. We mention this generalization only
to point out there are infinitely many examples of fully faithful
nerves of monads.
\end{rem}

\subsection{Double categories of (strong) Kleisli split-epis}

In looking for examples of suitable 2-categories $\mathbf{mKl}\left(\mathbf{Cat}\right)$,
one thing to consider is that 2-cells should be closed under horizontal
composition. One example of such 2-cells are split epimorphisms, and
this motivates the following. Take a 2-cell of $\mathbf{mKl}\left(\mathbf{Cat}\right)$
of the form $f\colon X\rightsquigarrow Y\colon\left(T,\mathcal{A}\right)\to\left(P,\mathcal{C}\right)$
to be pairs of maps $s\colon Y\to PX$ and $e\colon X\to PY$ rendering
commutative

\[
\xymatrix@=1.5em{Y\ar[r]^{s}\ar[rd]_{\eta_{Y}} & PX\ar[r]^{Pe} & P^{2}Y\\
 & PY\ar[ur]_{\eta_{PY}}
}
\]
Note that such arrows are split epimorphisms in the sense of Kleisli
composition, seen by composing with the multiplication $\mu_{Y}\colon P^{2}Y\to PY$.
A general square in the double category $N\left(P\right)$ denoted
as on the left below, is data such that both squares on the right
below commute
\[
\xymatrix@=1.5em{X\ar@{~>}[dd]_{\left(\pi,\tau\right)}\ar[rr]^{f} &  & X'\ar@{~>}[dd]^{\left(\pi',\tau'\right)} &  &  & X\ar@{~>}[dd]_{e}\ar[rr]^{f} &  & X'\ar@{~>}[dd]^{e'} & PX\ar[rr]^{Pf} &  & PX'\\
 &  &  &  & \colon\\
Y\ar[rr]_{g} &  & Y' &  &  & PY\ar[rr]_{Pg} &  & PY' & Y\ar[rr]_{g}\ar[uu]^{s} &  & Y'\ar[uu]_{s'}
}
\]
It is an easy exercise to check this notion of 2-cell is closed under
vertical composition. A more interesting exercise is to check that
this definition of 2-cell is closed under horizontal composition and
whiskering by monad morphisms, a fact which is much less obvious (though
simple and tedious to check). Again this yields fully faithful nerve
functors $N\colon\mathbf{mKl}\left(\mathbf{Cat}\right)\to\mathbf{Dbl}$
and $N^{T}\colon\mathbf{Mnd}\left(\mathbf{Cat}\right)\to\mathbf{Dbl}$.

\section{Iteration}

Given a pair of examples of fully faithful nerve functors $N_{1}^{T}\colon\mathbf{Mnd}\left(\mathbf{Cat}\right)\to\mathbf{Dbl}$
and $N_{2}^{T}\colon\mathbf{Mnd}\left(\mathbf{Cat}\right)\to\mathbf{Dbl}$,
we may iterate to construct a fully faithful functor from $\mathbf{Mnd}\left(\mathbf{Mnd}\left(\mathbf{Cat}\right)\right)\to\mathbf{Tpl}$
sending distributive laws to triple categories, and more generally
functors $\mathbf{Mnd}^{n}\left(\mathbf{Cat}\right)\to\left(n+1\right)\mathbf{fold}$
sending $n$-ary distributive laws to $\left(n+1\right)$fold categories.
This works by first noting that when $\mathcal{K}=\mathbf{Dbl}$,
a nerve of the form $\mathbf{Mnd}\left(\mathbf{Dbl}\right)\to\mathbf{Dbl}$
factors through $\mathbf{Tpl}$, as both the object of objects and
object of morphisms of this double category, are themselves double
categories. Thus we have the composite
\[
\xymatrix@=1.5em{\mathbf{Mnd}\left(\mathbf{Mnd}\left(\mathbf{Cat}\right)\right)\myar{\mathbf{Mnd}\left(N_{1}^{T}\right)}{rrr} &  &  & \mathbf{Mnd}\left(\mathbf{Dbl}\right)\ar[rrr]^{N_{2}^{T}} &  &  & \text{\ensuremath{\mathbf{Tpl}}}}
\]
Iterating this process yields embeddings $\mathbf{Mnd}^{n}\left(\mathbf{Cat}\right)\to\left(n+1\right)\mathbf{fold}$,
and one can mix any family of examples of nerves of monads in any
order. 
\begin{example}
[The triple category of embeddings of a distributive law] Given two
copies of the nerve functor sending a monad $P$ to the double category
of $P$-embeddings $\mathscr{C}_{P\textnormal{-emb}}$, we have an
embedding of distributive laws into triple categories. We now work
out what this triple category is. 

Suppose we are given monads $T$ and $P$, and a distributive law
$\lambda\colon TP\to PT$. We have since $\mathbf{Mnd}\left(\mathbf{Cat}\right)\to\mathbf{Dbl}$
is fully faithful, a double category $\mathscr{C}_{P\textnormal{-emb}}$
and a double monad $\widetilde{T}$ on it. Given the 1-category $\mathscr{C}_{P\textnormal{-emb}}^{1}$
(using ``1'' to refer to the object of morphisms and ``0'' the
object of objects), we get from the monad $\widetilde{T}_{1}$ a double
category $\mathscr{C}_{\widetilde{T}_{1}\textnormal{-emb}}$. This
gives a diagram
\[
\xymatrix{\mathscr{C}_{\widetilde{T}_{1}\textnormal{-emb}}^{1}\ar@<-1ex>[d]\ar@<1ex>[d]\ar@<-1ex>[r]\ar@<1ex>[r] & \mathscr{C}_{P\textnormal{-emb}}^{1}\ar[l]\ar@<-1ex>[d]\ar@<1ex>[d]\\
\mathscr{C}_{\widetilde{T}_{1}\textnormal{-emb}}^{0}\ar[u]\ar@<-1ex>[r]\ar@<1ex>[r] & \mathscr{C}_{P\textnormal{-emb}}^{0}\ar[l]\ar[u]
}
\]
as a double-category internal to $\mathbf{Cat}$, i.e. a triple category.
This may also be written 
\[
\xymatrix{\mathscr{C}_{\widetilde{T}_{1}\textnormal{-emb}}^{1}\ar@<-1ex>[d]\ar@<1ex>[d]\ar@<-1ex>[r]\ar@<1ex>[r] & \mathscr{C}_{P\textnormal{-emb}}^{1}\ar[l]\ar@<-1ex>[d]\ar@<1ex>[d]\\
\mathscr{C}_{T\textnormal{-emb}}^{1}\ar[u]\ar@<-1ex>[r]\ar@<1ex>[r] & \mathscr{C}\ar[l]\ar[u]
}
\]
To make this clear, we write down what general objects in these categories
are (using ``res'' notation for $P$-embeddings and ``unit-factorization''
notation for $T$-embeddings to easily distinguish them)
\[
\xymatrix@R=1em{\text{\ensuremath{\left(L,\text{\textnormal{res}}_{L}\right)}}\to\text{\ensuremath{\left(K,\text{\textnormal{res}}_{K}\right)}\ensuremath{\ensuremath{\to T_{1}\text{\ensuremath{\left(L,\text{\textnormal{res}}_{L}\right)}}}}} & \text{\ensuremath{\left(L,\text{\textnormal{res}}_{L}\right)}}\\
X\to Y\to TX & X
}
\]
Thus a distributive law $\lambda\colon TP\to PT$ allows us to assemble
together the double category of $P$-embeddings and double category
of $T$-embeddings into a single triple category.\footnote{In the talk \cite{PareKlTalk} a related construction with distributive
laws was mentioned, with two Kleisli double categories being combined
with a distributive law. The iterated version on the simpler Kleisli
case was also seen by Miranda \cite{Miranda}. }
\end{example}

\section{Future work}

The original motivation of this work was to construct various examples
of embeddings out of 2-categories of monads $N\colon\mathbf{Mnd}\left(\mathbf{Cat}\right)\to\mathbf{Dbl}$
to prove results about distributive laws. As such an embedding is
fully faithful, a distributive law may be identified with a double
monad on a double category $N\left(P\right)$. This approach is most
useful in reducing the coherence axioms for pseudodistributive laws
involving KZ pseudomonads \cite{kock1972} (using the ``$P$-embedding''
nerve which shows up in AWFS \cite{AWFS2}). However, this would require
a generalization of these nerves to the setting of pseudomonads.

\bibliographystyle{plain}
\bibliography{references}

\begin{thebibliography}{1}

\bibitem{AWFS2}
John Bourke and Richard Garner.
\newblock Algebraic weak factorisation systems {II}: {C}ategories of weak maps.
\newblock {\em J. Pure Appl. Algebra}, 220(1):148--174, 2016.

\bibitem{AdjDbl}
Marco Grandis and Robert Pare.
\newblock Adjoint for double categories. {A}ddenda to: ``{L}imits in double
  categories'' [{C}ah. {T}opol. {G}\'{e}om. {D}iff\'{e}r. {C}at\'{e}g. {\bf 40}
  (1999), no. 3, 162--220; mr1716779].
\newblock {\em Cah. Topol. G\'{e}om. Diff\'{e}r. Cat\'{e}g.}, 45(3):193--240,
  2004.

\bibitem{kock1972}
Anders Kock.
\newblock Monads for which structures are adjoint to units.
\newblock {\em Aarhus Preprint Series}, No. 35, 1972/73.
\newblock Revised version published in J. Pure Appl. Algebra, 104 (1995).

\bibitem{StreetFTM2}
Stephen Lack and Ross Street.
\newblock The formal theory of monads. {II}.
\newblock {\em J. Pure Appl. Algebra}, 175(1-3):243--265, 2002.
\newblock Special volume celebrating the 70th birthday of Professor Max Kelly.

\bibitem{Miranda}
Adrian Miranda.
\newblock Enrichment over bijective on objects functors with applications to
  explicit free cocompletions with applications to the formal theory of monads.
\newblock {\em Australian Kittens 2021}.

\bibitem{PareKlTalk}
Robert Par{\'e}.
\newblock The double theory of monads.
\newblock {\em CaTCat 2021}.

\bibitem{StreetFTM}
Ross Street.
\newblock The formal theory of monads.
\newblock {\em J. Pure Appl. Algebra}, 2(2):149--168, 1972.

\bibitem{WalkerDL}
Charles Walker.
\newblock Distributive laws via admissibility.
\newblock {\em Appl. Categ. Structures}, 27(6):567--617, 2019.

\end{thebibliography}

\end{document}